\documentclass[12pt]{amsart}
\usepackage{amsmath,amssymb,amsbsy,amsfonts,amsthm,latexsym,amsopn,amstext,
                            amsxtra,euscript,amscd}


%
%
\newfont{\teneufm}{eufm10}
\newfont{\seveneufm}{eufm7}
\newfont{\fiveeufm}{eufm5}
%
%
\newfam\eufmfam
                 \textfont\eufmfam=\teneufm \scriptfont\eufmfam=\seveneufm
                 \scriptscriptfont\eufmfam=\fiveeufm
%
%

%
%


\def\bbbc{{\mathchoice {\setbox0=\hbox{$\displaystyle\rm C$}\hbox{\hbox
to0pt{\kern0.4\wd0\vrule height0.9\ht0\hss}\box0}}
{\setbox0=\hbox{$\textstyle\rm C$}\hbox{\hbox
to0pt{\kern0.4\wd0\vrule height0.9\ht0\hss}\box0}}
{\setbox0=\hbox{$\scriptstyle\rm C$}\hbox{\hbox
to0pt{\kern0.4\wd0\vrule height0.9\ht0\hss}\box0}}
{\setbox0=\hbox{$\scriptscriptstyle\rm C$}\hbox{\hbox
to0pt{\kern0.4\wd0\vrule height0.9\ht0\hss}\box0}}}}
\def\bbbq{{\mathchoice {\setbox0=\hbox{$\displaystyle\rm
Q$}\hbox{\raise
0.15\ht0\hbox to0pt{\kern0.4\wd0\vrule height0.8\ht0\hss}\box0}}
{\setbox0=\hbox{$\textstyle\rm Q$}\hbox{\raise
0.15\ht0\hbox to0pt{\kern0.4\wd0\vrule height0.8\ht0\hss}\box0}}
{\setbox0=\hbox{$\scriptstyle\rm Q$}\hbox{\raise
0.15\ht0\hbox to0pt{\kern0.4\wd0\vrule height0.7\ht0\hss}\box0}}
{\setbox0=\hbox{$\scriptscriptstyle\rm Q$}\hbox{\raise
0.15\ht0\hbox to0pt{\kern0.4\wd0\vrule height0.7\ht0\hss}\box0}}}}
\def\bbbt{{\mathchoice {\setbox0=\hbox{$\displaystyle\rm
T$}\hbox{\hbox to0pt{\kern0.3\wd0\vrule height0.9\ht0\hss}\box0}}
{\setbox0=\hbox{$\textstyle\rm T$}\hbox{\hbox
to0pt{\kern0.3\wd0\vrule height0.9\ht0\hss}\box0}}
{\setbox0=\hbox{$\scriptstyle\rm T$}\hbox{\hbox
to0pt{\kern0.3\wd0\vrule height0.9\ht0\hss}\box0}}
{\setbox0=\hbox{$\scriptscriptstyle\rm T$}\hbox{\hbox
to0pt{\kern0.3\wd0\vrule height0.9\ht0\hss}\box0}}}}
\def\bbbs{{\mathchoice
{\setbox0=\hbox{$\displaystyle     \rm S$}\hbox{\raise0.5\ht0\hbox
to0pt{\kern0.35\wd0\vrule height0.45\ht0\hss}\hbox
to0pt{\kern0.55\wd0\vrule height0.5\ht0\hss}\box0}}
{\setbox0=\hbox{$\textstyle        \rm S$}\hbox{\raise0.5\ht0\hbox
to0pt{\kern0.35\wd0\vrule height0.45\ht0\hss}\hbox
to0pt{\kern0.55\wd0\vrule height0.5\ht0\hss}\box0}}
{\setbox0=\hbox{$\scriptstyle      \rm S$}\hbox{\raise0.5\ht0\hbox
to0pt{\kern0.35\wd0\vrule height0.45\ht0\hss}\raise0.05\ht0\hbox
to0pt{\kern0.5\wd0\vrule height0.45\ht0\hss}\box0}}
{\setbox0=\hbox{$\scriptscriptstyle\rm S$}\hbox{\raise0.5\ht0\hbox
to0pt{\kern0.4\wd0\vrule height0.45\ht0\hss}\raise0.05\ht0\hbox
to0pt{\kern0.55\wd0\vrule height0.45\ht0\hss}\box0}}}}
\def\bbbz{{\mathchoice {\hbox{$\sf\textstyle Z\kern-0.4em Z$}}
{\hbox{$\sf\textstyle Z\kern-0.4em Z$}}
{\hbox{$\sf\scriptstyle Z\kern-0.3em Z$}}
{\hbox{$\sf\scriptscriptstyle Z\kern-0.2em Z$}}}}

%
%
 \newtheorem{thm}{Theorem}
 
 \newtheorem{lem}[thm]{Lemma}
 
 \theoremstyle{definition}
 
 \theoremstyle{remark}

%
%
\def\cA{{\mathcal A}}
\def\cB{{\mathcal B}}

\def\cQ{{\mathcal Q}}
\def\cR{{\mathcal R}}
\def\cS{{\mathcal S}}

\def\cU{{\mathcal U}}
\def\cV{{\mathcal V}}
\def\cW{{\mathcal W}}


\def\({\left(}
\def\){\right)}
\def\[{\left[}
\def\]{\right]}
\def\<{\langle}
\def\>{\rangle}

\def\fl#1{\left\lfloor#1\right\rfloor}
\def\rf#1{\left\lceil#1\right\rceil}

\def\F{\mathbb{F}}

\def\mand{\qquad\mbox{and}\qquad}


\begin{document}

\title[Counting  Additive Decompositions of Quadratic Residues]{Counting Additive Decompositions of Quadratic Residues in Finite Fields}

\author[S. R. Blackburn]{Simon R. Blackburn}  
\address{Department of Mathematics, Royal Holloway University of London, 
Egham, Surrey, TW20 0EX, UK}  
\email{s.blackburn@rhul.ac.uk}

\author[S. V. Konyagin]{Sergei V.~Konyagin}
\address{Steklov Mathematical Institute,
8, Gubkin Street, Moscow, 119991, Russia}
 \email{konyagin@mi.ras.ru}

\author[I. E.~Shparlinski]{Igor E.~Shparlinski}
\address{Department of Pure Mathematics, University of 
New South Wales, Sydney, NSW 2052 Australia}
\email{igor.shparlinski@unsw.edu.au}

\begin{abstract} 
  We say that a set $\cS$ is additively decomposed into two sets $\cA$
  and $\cB$ if $\cS = \{a+b~:~a\in \cA, \ b \in \cB\}$.
  A.~S{\'a}rk{\"o}zy has recently conjectured that the set $\cQ$ of
  quadratic residues modulo a prime $p$ does not have nontrivial
  decompositions.  Although various partial results towards this
  conjecture have been obtained, it is still open.  Here we obtain a
  nontrivial upper bound on the number of such decompositions.
\end{abstract}

\subjclass[2010]{11B13, 11L40}

\keywords{Additive decompositions, finite fields, quadratic nonresidues
character sums}

\maketitle

\section{Introduction}

Given two subsets $\cA, \cB \subseteq \F_q$ 
of the finite field $\F_q$ of $q$ elements, we
define their sum as 
$$
\cA + \cB  = \{a+b~:~a\in \cA, \ b \in \cB\}.
$$
A set  $\cS \subseteq \F_q$  is called {\it additively decomposable\/}
into two sets if $\cS = \cA + \cB$ for some sets $\cA, \cB$ with 
$$
\min\{\#\cA, \# \cB\} \ge 2.
$$

S{\'a}rk{\"o}zy~\cite{Sark} has conjectured that the set $\cQ$ of quadratic 
residues modulo a prime $p$ does not have additive decompositions and shown towards 
this conjecture that any additive decomposition
$$
\cQ = \cA + \cB
$$
satisfies 
$$
\frac{p^{1/2}}{3\log p} \le \min\{\#\cA, \# \cB\} \le \max\{\#\cA, \# \cB\} \le p^{1/2} \log p .
$$
The method also works for an arbitrary finite field of odd characteristic. 
In~\cite{Shp} this result has been improved to 
\begin{equation}
\label{eq:low up}
c q^{1/2}  \le \min\{\#\cA, \# \cB\} \le \max\{\#\cA, \# \cB\} \le C q^{1/2}, 
\end{equation}
for some absolute constants $C\ge c > 0$ 
(and also generalised to other multiplicative subgroups of $\F_q^*$).

Shkredov~\cite{Shkr} has recently made remarkable
progress towards the conjecture of S{\'a}rk{\"o}zy~\cite{Sark}
by showing that the conjecture holds with $\cA=\cB$. That is,
$\cQ \ne \cA + \cA$ for any set $\cA \subseteq \F_p$.

Furthermore, Dartyge and S{\'a}rk{\"o}zy~\cite{DaSa} have made a 
similar conjecture for the set $\cR$ of primitive roots modulo $p$.
We also refer 
to~\cite{DaSa,Els,Sark} for further references about set decompositions.

For an odd prime power $q$ we denote by $N(q)$ the 
total number of pairs $(\cA,\cB)$ of sets $\cA, \cB \subseteq \F_q$ 
that provide an additive decomposition of the set of quadratic residues
of $\F_q$, that is,  the set $\cQ = \{x^2~:~x\in \F_q^*\}$. 
The conjecture of S{\'a}rk{\"o}zy~\cite{Sark}  is equivalent to 
the statement that $N(q) = 0$ when $q$ is an odd prime (and is probably
true for any odd prime power as well). 

The bound~\eqref{eq:low up} implies 
$$
N(q) \le \exp\(O(q^{1/2} \log q)\). 
$$

Here we obtain a more precise estimate:

\begin{thm}
\label{thm:Nq} 
For any odd prime power $q$, we have
$$
N(q) \le \exp\(O(q^{1/2})\). 
$$
\end{thm}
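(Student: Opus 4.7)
The plan is to combine the size constraints $\#\cA,\#\cB=\Theta(q^{1/2})$ from~\eqref{eq:low up} with character-sum estimates for intersections of shifts of $\cQ$, in order to encode each valid decomposition using $O(q^{1/2})$ bits of information. Let $\chi$ denote the quadratic character of $\F_q$, and for $\cX\subseteq \F_q$ write
$$
T(\cX) := \bigcap_{x \in \cX}(\cQ - x) = \{y \in \F_q : \chi(x+y)=1 \text{ for all } x \in \cX\}.
$$
If $\cA+\cB=\cQ$, then $\cB\subseteq T(\cA_0)$ for every $\cA_0\subseteq\cA$ and, symmetrically, $\cA\subseteq T(\cB_0)$ for every $\cB_0\subseteq\cB$: knowing a short ``witness'' inside one set confines the other inside $T$ of that witness.

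The baseline estimate on $|T(\cX)|$ comes from writing $\mathbf{1}[y\in T(\cX)]=\prod_{x\in\cX}(1+\chi(x+y))/2$ off the $|\cX|$ points $y=-x$, expanding over $S\subseteq\cX$, and applying Weil's bound to each character sum $\sum_y \chi\bigl(\prod_{x\in S}(x+y)\bigr)$ of a squarefree polynomial of degree $|S|$. This yields
$$
|T(\cX)| \le \frac{q}{2^{|\cX|}} + O\bigl(|\cX|\cdot q^{1/2}\bigr),
$$
which at the optimal $|\cX|\asymp \log q$ is only $O(q^{1/2}\log q)$---strong enough for the trivial $\exp(O(q^{1/2}\log q))$ bound but not the target.

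To trim the extra $\log q$ I would use a second-moment argument. Fix $\cB$ from a valid decomposition, set $\cQ'_a:=\{y:\chi(a+y)=1\}$, and expand
$$
\mathbb{E}_{\cB_0\subseteq\cB,\,|\cB_0|=k}|T(\cB_0)| = \sum_{a\in\F_q}\binom{|\cB\cap\cQ'_a|}{k}\bigg/\binom{|\cB|}{k}.
$$
The values $a\in\cA$ contribute at most $\#\cA=O(q^{1/2})$. For $a\notin\cA$, the function $a\mapsto|\cB\cap\cQ'_a|$ has mean $\#\cB/2$, and the identity $\sum_a\chi((a+b_1)(a+b_2))=-1$ for $b_1\ne b_2$ lets one evaluate $\sum_a|\cB\cap\cQ'_a|^2$ explicitly and obtain $\sum_a(|\cB\cap\cQ'_a|-\#\cB/2)^2=O(\#\cB\cdot q)$. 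By Chebyshev only $O(q^{1/2})$ values of $a$ exceed the threshold $(1/2+\alpha)\#\cB$ for a small fixed $\alpha>0$, and these contribute $O(q^{1/2})$ in total; for the remaining $a$ we have $(|\cB\cap\cQ'_a|/\#\cB)^k\le (1/2+\alpha)^k$, so their total contribution is at most $q(1/2+\alpha)^k$. Choosing $k=C\log q$ with $C$ so large that $q(1/2+\alpha)^k\le q^{1/2}$ keeps this piece $O(q^{1/2})$ as well. Averaging then produces some $\cB_0\subseteq\cB$ of size $k$ with $|T(\cB_0)|=O(q^{1/2})$, and a symmetric argument gives $\cA_0\subseteq\cA$ with $|T(\cA_0)|=O(q^{1/2})$.

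To finish, I would encode each valid $(\cA,\cB)$ by a quadruple $(\cA_0,\cB_0,\cA,\cB)$ with $\cA_0\subseteq\cA$, $\cB_0\subseteq\cB$, $|\cA_0|=|\cB_0|=k$, and both size bounds $|T(\cA_0)|,|T(\cB_0)|=O(q^{1/2})$. There are at most $\binom{q}{k}^2=\exp(O((\log q)^2))$ choices for $(\cA_0,\cB_0)$; given them, $\cA$ is one of at most $2^{|T(\cB_0)|}=2^{O(q^{1/2})}$ subsets of $T(\cB_0)$ containing $\cA_0$, and $\cB$ one of at most $2^{|T(\cA_0)|}=2^{O(q^{1/2})}$ subsets of $T(\cA_0)$ containing $\cB_0$. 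Multiplying and absorbing $(\log q)^2=o(q^{1/2})$ into the exponent yields $N(q)\le\exp(O(q^{1/2}))$. The main technical obstacle is the second-moment step: keeping the exceptional set $\{a:|\cB\cap\cQ'_a|\gg \#\cB\}$ of size $O(q^{1/2})$, since any weaker control would reintroduce the $\log q$ factor---this is precisely where the sharp Weil estimate for the quadratic character sum of $(x+b_1)(x+b_2)$ is essential.
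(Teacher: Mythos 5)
Your argument is correct, and although it shares the paper's overall ``witness'' architecture --- use a small subset of $\cB$ to pin $\cA$ inside a container of size $O(q^{1/2})$ and vice versa, then pay $2^{O(q^{1/2})}$ for each of $\cA,\cB$ and a negligible factor for enumerating witnesses --- the key lemma that produces the container is genuinely different. The paper takes the witness $\cV\subseteq\cB$ of polynomial size $\fl{q^{\varepsilon/2}}$ and invokes Karatsuba's double character sum bound (Lemma~\ref{lem:DoubleSums}, i.e.\ Weil plus H\"older) to conclude that \emph{every} such witness already confines $\cA$ to a set of size $O(q^{1/2})$; enumerating witnesses then costs $q^{q^{\varepsilon/2}}=\exp(o(q^{1/2}))$. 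You take witnesses of size $k\asymp\log q$, for which a generic witness only gives $O(q^{1/2}\log q)$, and recover the clean $O(q^{1/2})$ by a second-moment selection: the identity $\sum_a\chi\bigl((a+b_1)(a+b_2)\bigr)=-1$ gives $\sum_a\bigl(|\cB\cap\cQ'_a|-\#\cB/2\bigr)^2=O(q\,\#\cB)$, so at most $O(q/\#\cB)=O(q^{1/2})$ shifts $a$ are exceptional (this is where the lower bound $\#\cB\gg q^{1/2}$ from~\eqref{eq:low up} enters), while the non-exceptional shifts contribute at most $q(1/2+\alpha)^k\le q^{1/2}$ via $\binom{M}{k}/\binom{N}{k}\le(M/N)^k$ for $M\le N$. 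Your route is more elementary in its character-sum input --- it needs only the complete sum of $\chi$ over a quadratic polynomial, not the Weil bound for degree $2\nu$ underlying Lemma~\ref{lem:DoubleSums} --- at the price of the averaging step and of having to record which witness was chosen (harmless: $\binom{q}{k}^2=\exp(O((\log q)^2))$ possibilities; just fix the witness canonically, say lexicographically first, so the fibres of $(\cA,\cB)\mapsto(\cA_0,\cB_0)$ are well defined). Both arguments give the same final bound, and both extend to characters of fixed order $d$ with $1/2$ replaced by $1/d$.
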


Finally, we remark that the argument we use to prove
Theorem~\ref{thm:Nq} can be 
extended to prove results on additive decompositions of many other ``multiplicatively''
defined sets, such as cosets of multiplicative groups 
and sets of primitive elements  of $\F_q^*$. See~\cite{DaSa,Shp} 
for analogues of~\eqref{eq:low up}  for such sets. 


\section{Bounds of Multiplicative Character Sums}

As usual, we use the expressions $A \ll B$ and $A=O(B)$ to
mean $|A|\leq cB$ for some constant $c$.

We recall the following bound on a double character sum
due to Karatsuba~\cite{Kar1}, see also~\cite[Chapter~VIII, Problem~9]{Kar2}, 
which can easily be derived  from the
Weil bound (see~\cite[Corollary~11.24]{IwKow}) and the H{\"o}lder inequality.

\begin{lem}
\label{lem:DoubleSums} For any integer $\nu \ge 1$, sets
$\cU, \cV \subseteq \F_q$ and nontrivial multiplicative character  $\chi$ 
of $\F_q$,  
we have
$$
\sum_{u\in \cU}\sum_{v \in \cV} \chi(u+v)  \ll  (\# \cU)^{1-1/2\nu} \#\cV q^{1/4\nu} +  
(\# \cU)^{1-1/2\nu} (\#\cV)^{1/2} q^{1/2\nu} ,
$$
where the implied constant depends only on $\nu$. 
\end{lem}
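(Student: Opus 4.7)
My plan is to deduce the stated bound from the Weil bound via Hölder's inequality, following the standard moment method. Let $\chi$ have order $d$ and set $T(u) = \sum_{v \in \cV} \chi(u+v)$, so that the double sum in question is $S = \sum_{u \in \cU} T(u)$.

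First I would apply Hölder's inequality with exponent $2\nu$, then enlarge the outer range from $\cU$ to $\F_q$:
\begin{equation*}
|S|^{2\nu} \le (\#\cU)^{2\nu-1} \sum_{u \in \cU} |T(u)|^{2\nu} \le (\#\cU)^{2\nu-1} \sum_{u \in \F_q} |T(u)|^{2\nu}.
\end{equation*}
Expanding $|T(u)|^{2\nu} = T(u)^\nu \overline{T(u)}^\nu$ as a sum over $2\nu$-tuples $(v_1,\dots,v_\nu,w_1,\dots,w_\nu) \in \cV^{2\nu}$ gives
\begin{equation*}
\sum_{u \in \F_q} |T(u)|^{2\nu} = \sum_{v_1,\dots,v_\nu,w_1,\dots,w_\nu \in \cV} \sum_{u \in \F_q} \chi(f(u)),
\end{equation*}
where $f(x) = \prod_{i=1}^\nu (x+v_i) \prod_{j=1}^\nu (x+w_j)^{d-1}$ (using $\overline{\chi} = \chi^{d-1}$).

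The next step is to split the tuples into a \emph{diagonal} part, where each linear factor $(x+v_i)$ is matched in $f$ to an equal factor $(x+w_j)$ so that $f$ is a perfect $d$-th power (equivalently, the multisets $\{v_1,\dots,v_\nu\}$ and $\{w_1,\dots,w_\nu\}$ coincide), and an \emph{off-diagonal} part where they do not. On the diagonal, $\chi(f(u)) \in \{0,1\}$, so the inner sum is bounded trivially by $q$; the number of diagonal tuples is at most $\nu!(\#\cV)^\nu$. On the off-diagonal, $f$ is not a $d$-th power in $\overline{\F}_q[x]$, and since $\deg f \le 2\nu$, the Weil bound of \cite[Corollary~11.24]{IwKow} gives $\bigl|\sum_u \chi(f(u))\bigr| \le (2\nu-1)q^{1/2}$, with at most $(\#\cV)^{2\nu}$ such tuples. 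Collecting both contributions:
\begin{equation*}
\sum_{u \in \F_q} |T(u)|^{2\nu} \ll (\#\cV)^\nu q + (\#\cV)^{2\nu} q^{1/2}.
\end{equation*}

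Finally I would insert this back into the Hölder inequality and take $2\nu$-th roots, using $(a+b)^{1/(2\nu)} \le a^{1/(2\nu)}+b^{1/(2\nu)}$ to split the two terms, which yields exactly the claimed bound
\begin{equation*}
|S| \ll (\#\cU)^{1-1/2\nu}(\#\cV)^{1/2} q^{1/2\nu} + (\#\cU)^{1-1/2\nu} \#\cV \, q^{1/4\nu}.
\end{equation*}
The main obstacle is the correct bookkeeping of the diagonal: identifying precisely when $f(x)$ becomes a $d$-th power in $\overline{\F}_q[x]$ (so that Weil does not apply) and verifying that the count of such "bad" tuples is $O_\nu((\#\cV)^\nu)$ rather than something larger. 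Everything else is a routine application of Hölder and the Weil bound.
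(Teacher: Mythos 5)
The paper does not actually prove this lemma --- it quotes it from Karatsuba and remarks only that it follows from the Weil bound and H\"older's inequality --- and your proposal is exactly that standard derivation, so the approach is the intended one and the final computation closes correctly. Two points need tightening, the first of which you rightly flagged as the delicate step. Your characterization of the diagonal is not correct as stated: with $f(x)=\prod_i(x+v_i)\prod_j(x+w_j)^{d-1}$, the polynomial is a $d$-th power in $\overline{\F}_q[x]$ precisely when, for every $a$, the multiplicity $m_a$ of $a$ among the $v_i$ is congruent modulo $d$ to its multiplicity $n_a$ among the $w_j$ --- not when the two multisets coincide. For example, with $d=2$ and $\nu=2$ the tuple $v_1=v_2=a$, $w_1=w_2=b$ ($a\ne b$) gives the perfect square $(x+a)^2(x+b)^2$ even though the multisets differ; applying Weil to such a tuple would be invalid. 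The repair is to enlarge the diagonal to all tuples with $m_a\equiv n_a \pmod d$ for every $a$, and the count is still $O_\nu((\#\cV)^\nu)$: for any such tuple each element of the combined support occurs at least twice among the $2\nu$ coordinates (either $m_a=n_a\ge 1$, or $|m_a-n_a|\ge d\ge 2$), so the support has size at most $\nu$. Second, the hypothesis and the constant in \cite[Corollary~11.24]{IwKow} are governed by the number of distinct roots of $f$, not by $\deg f$; here $\deg f=\nu d$ may be large, but $f$ has at most $2\nu$ distinct roots, which is what yields the bound $(2\nu-1)q^{1/2}$ on the off-diagonal terms. With these two corrections your argument is complete and gives exactly the stated estimate.
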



We obtain the following result as a corollary of Lemma~\ref{lem:DoubleSums}:

\begin{lem}
\label{lem:Char 1}  
For any $\varepsilon>0$ if for
two sets $\cU, \cV \subseteq \F_q$ with $\# \cV \ge q^{\varepsilon}$ and a
nontrivial multiplicative character  $\chi$
of $\F_q$,  we have
 $\chi(u+v) = 1$ for all pairs $(u,v) \in \cU \times \cV$, then
$\# \cU \ll q^{1/2}$ where the implied constant depends only on $\varepsilon$.
\end{lem}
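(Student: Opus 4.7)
The strategy is to apply the double-character-sum estimate of Lemma~\ref{lem:DoubleSums} to the trivial sum produced by the hypothesis, and then to optimise the choice of $\nu$ in terms of $\varepsilon$. The first step is to observe that, because $\chi$ is nontrivial, $\chi(0) = 0 \ne 1$, so the assumption $\chi(u+v)=1$ on $\cU\times\cV$ in particular forces $u+v \ne 0$ for every pair, and hence
$$
\sum_{u\in\cU}\sum_{v\in\cV}\chi(u+v) \;=\; \#\cU \cdot \#\cV.
$$

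Next I would substitute this identity into the bound of Lemma~\ref{lem:DoubleSums}, divide through by $(\#\cU)^{1-1/(2\nu)}\cdot\#\cV$ (assuming $\#\cU\ge 1$, as otherwise the statement is vacuous), and obtain
$$
(\#\cU)^{1/(2\nu)} \;\ll\; q^{1/(4\nu)} \;+\; (\#\cV)^{-1/2}\,q^{1/(2\nu)}.
$$
Using the hypothesis $\#\cV \ge q^\varepsilon$, the second term is bounded by $q^{1/(2\nu)-\varepsilon/2}$.

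The final step is to pick $\nu$ depending only on $\varepsilon$. Choosing any integer $\nu \ge 1/(2\varepsilon)$ ensures $1/(2\nu) - \varepsilon/2 \le 1/(4\nu)$, so that the first term dominates and $(\#\cU)^{1/(2\nu)} \ll q^{1/(4\nu)}$. Raising both sides to the $2\nu$-th power yields $\#\cU \ll q^{1/2}$, with the implied constant depending only on $\nu$, hence only on $\varepsilon$.

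There is essentially no serious obstacle here: the entire analytic content is already packaged in Lemma~\ref{lem:DoubleSums}, and the only care required is to choose $\nu$ large enough in terms of $\varepsilon$ so that the term carrying the $(\#\cV)^{-1/2}$ saving is absorbed by the principal term $q^{1/(4\nu)}$.
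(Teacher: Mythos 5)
Your proof is correct and follows essentially the same route as the paper: both start from the identity $\sum_{u\in\cU}\sum_{v\in\cV}\chi(u+v)=\#\cU\,\#\cV$, apply Lemma~\ref{lem:DoubleSums}, and choose $\nu\approx(2\varepsilon)^{-1}$ so that the condition $\#\cV\ge q^{\varepsilon}\ge q^{1/2\nu}$ makes the first term dominate, giving $\#\cU\ll q^{1/2}$. The only (harmless) addition is your remark that $\chi(0)\ne 1$ forces $u+v\ne 0$, which the paper leaves implicit.
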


\begin{proof} We see from Lemma~\ref{lem:DoubleSums} that 
\begin{equation*}
\begin{split}
\# \cU \# \cV & = \sum_{u\in \cU}\sum_{v \in \cV} \chi(u+v) \\
&  \ll 
 (\# \cU)^{1-1/2\nu} \#\cV q^{1/4\nu} +  
(\# \cU)^{1-1/2\nu} (\#\cV)^{1/2} q^{1/2\nu}.
 \end{split}
\end{equation*}
Taking $\nu$ sufficiently large so that the first term dominates
(for example, taking $\nu = \rf{(2\varepsilon)^{-1}}$ so that $\#\cV \ge q^{1/2\nu}$)
we find that
$$
\# \cU \# \cV  \ll  (\# \cU)^{1-1/2\nu} \#\cV q^{1/4\nu}, 
$$
which implies the result. 
\end{proof}

We remark that the bounds~\eqref{eq:low up} follow from S\'ark\"ozy's
result~\cite{Sark} and Lemma~\ref{lem:Char 1}. To see this, note that the upper
bound follows by taking $\chi$ to be the quadratic character in
Lemma~\ref{lem:Char 1}, and taking $\cU=\cA$ and $\cV=\cB$ (and then
$\cU=\cB$ and $\cV=\cA$). The lower bound now follows since $\# \cQ\leq
\# \cA \# \cB$.

\section{Proof of Theorem~\ref{thm:Nq}}
\label{sec:Proof}

The proof  of Theorem~\ref{thm:Nq}  is instant from the 
following result, which is of independent interest.

For positive integers $k$ and $m$, let $N(k,m,q)$  denote 
the  number of pairs $(\cA,\cB)$ of sets $\cA, \cB \subseteq \F_q$ 
with $\# \cA = k$, $\cB=m$ such that $\cQ = \cA + \cB$. 

To simplify formulas we extend the definition of binomial coefficients to
all non-negative real numbers. More precisely, for a real 
$z \ge 0$ and an integer $n$  we set
$$
\binom{z}{n} = \binom{\fl{z}}{n}.
$$

\begin{lem}
\label{lem:Nkmq} For any fixed $\varepsilon > 0$ there is a constant $c>0$ 
such that for all integers  
$k$ and $m$ with $q> k> q^\varepsilon$ and $q> m > q^\varepsilon$,
we have
$$
N(k,m,q) \le   \binom{cq^{1/2}}{k} \binom{cq^{1/2}}{m} .
$$
\end{lem}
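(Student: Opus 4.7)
My plan is to apply Lemma~\ref{lem:Char 1} twice --- once for each side of the decomposition --- and then combine the resulting fiber bounds. The key observation is that $\cA + \cB = \cQ$ forces $\chi(a+b) = 1$ for every $a \in \cA$ and $b \in \cB$, where $\chi$ is the quadratic character of $\F_q$.

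First, I would apply Lemma~\ref{lem:Char 1} with $\cU = \cA$, $\cV = \cB$. Since $\#\cB = m \ge q^{\varepsilon}$, this immediately gives $\#\cA \le c q^{1/2}$, and symmetrically $\#\cB \le c q^{1/2}$. In particular, $k$ and $m$ are automatically at most $cq^{1/2}$; otherwise $N(k,m,q) = 0$ and the bound is trivial.

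Second, for each fixed valid $\cB$, any compatible $\cA$ lies in
\[
\Phi(\cB) := \{u \in \F_q : \chi(u+b) = 1 \text{ for all } b \in \cB\}.
\]
Applying Lemma~\ref{lem:Char 1} again (with $\cU = \Phi(\cB)$, $\cV = \cB$) yields $\#\Phi(\cB) \le cq^{1/2}$, so there are at most $\binom{cq^{1/2}}{k}$ choices of $\cA$ for any given $\cB$. Dually, defining $\Psi(\cA) := \{v \in \F_q : \chi(a+v) = 1 \text{ for all } a \in \cA\}$, we have $\cB \subseteq \Psi(\cA)$ and $\#\Psi(\cA) \le cq^{1/2}$, so each valid $\cA$ admits at most $\binom{cq^{1/2}}{m}$ compatible $\cB$'s.

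To obtain the final product bound I would aim to show that every valid $\cA$ lies in a single set $U$ of size $O(q^{1/2})$ and every valid $\cB$ lies in a single set $V$ of size $O(q^{1/2})$; then $N(k,m,q) \le \binom{\#U}{k}\binom{\#V}{m} \le \binom{cq^{1/2}}{k}\binom{cq^{1/2}}{m}$. The natural candidates, after fixing any valid pair $(\cA^*, \cB^*)$, are $U = \Phi(\cB^*)$ and $V = \Psi(\cA^*)$, both of size $O(q^{1/2})$ by the previous step. The main obstacle is verifying the inclusions for a second valid pair $(\cA, \cB)$: requiring $\cA \subseteq \Phi(\cB^*)$ is equivalent to $\cA + \cB^* \subseteq \cQ$, which does not follow mechanically from $\cA + \cB = \cA^* + \cB^* = \cQ$. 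Overcoming this obstacle --- either by a shrewd choice of base pair $(\cA^*, \cB^*)$ (for instance one that is extremal in some sense), or by a more delicate combinatorial argument that exploits the surjectivity condition $\cQ \subseteq \cA + \cB$ alongside the inclusion condition, or by a container-style reduction that bypasses the need for a universal containing set --- is the main technical step I would have to carry out.
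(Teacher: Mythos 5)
Your setup is sound, and your bound on the number of $\cB$'s compatible with a fixed $\cA$ (via $\cB\subseteq\Psi(\cA)$ and Lemma~\ref{lem:Char 1}) is exactly what the paper does for that half of the count. But the step you flag as ``the main technical step I would have to carry out'' is a genuine gap, and it is the heart of the lemma: you still need to bound the number of sets $\cA$ occurring in \emph{some} valid pair by $\binom{cq^{1/2}}{k}$, and there is no reason a single universal container $U$ of size $O(q^{1/2})$ containing every valid $\cA$ should exist. Your candidate $U=\Phi(\cB^*)$ for a fixed base pair fails for precisely the reason you identify, and the alternative routes you sketch (an extremal base pair, exploiting surjectivity of the map onto $\cQ$) are not developed and are not obviously workable. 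As written, the argument does not close: the trivial bound on the number of valid $\cA$'s is $\binom{q}{k}\approx\exp(Cq^{1/2}\log q)$, which is genuinely larger than the target $\binom{cq^{1/2}}{k}\approx\exp(Cq^{1/2})$.

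The paper's resolution is the ``container-style reduction'' you mention only in passing, and it is worth spelling out because it is the one new idea in the proof. Instead of one container, one uses a family of containers indexed by the small sets $\cV\subseteq\F_q$ of size $\fl{q^{\varepsilon/2}}$: for each such $\cV$, count only the pairs with $\cV\subseteq\cB$. For those pairs $\cA\subseteq\cU:=\{u:\chi(u+v)=1\ \text{for all}\ v\in\cV\}$, and Lemma~\ref{lem:Char 1} (applicable since $\#\cV\ge q^{\varepsilon/4}$, say) gives $\#\cU\ll q^{1/2}$, hence at most $\binom{c_1q^{1/2}}{k}$ choices of $\cA$, and then, by your own second step, at most $\binom{c_2q^{1/2}}{m}$ choices of $\cB$. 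Summing over all $\cV$ costs a factor $\binom{q}{\fl{q^{\varepsilon/2}}}\le q^{q^{\varepsilon/2}}=\exp\(q^{\varepsilon/2}\log q\)$, and this overhead is absorbed into the constant $c$ precisely because of the hypothesis $k>q^{\varepsilon}$: replacing $c_1$ by a larger $c$ multiplies $\binom{c_1q^{1/2}}{k}$ by at least $(c/c_1)^{k}\ge (c/c_1)^{q^{\varepsilon}}$, which dominates $q^{q^{\varepsilon/2}}$ for large $q$. This enumeration over small ``fingerprints'' of $\cB$, together with the observation that its cost is negligible at the stated scale of $k$ and $m$, is the missing ingredient in your proposal.
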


\begin{proof} We fix a set $\cV \subseteq \F_q$ of size 
$\#\cV = \fl{q^{\varepsilon/2}}$. We estimate the number 
$N(\cV, k,m,q)$ of sets
 $\cA, \cB \subseteq \F_q$ 
with $\# \cA = k$, $\cB=m$ such that 
$$\cQ = \cA + \cB\mand \cV \subseteq \cB.
$$ 

Let $\chi$ be the quadratic character. Let $\cU$ be the set of
elements $u \in \F_q$ such that for every $v \in \cV$ we have
$\chi(u+v) = 1$.  We see from Lemma~\ref{lem:Char 1} that $\# \cU\ll
q^{1/2}$.

Any set $\cA$ which contributes to $N(\cV, k,m,q)$ satisfies
$\cA \subseteq \cU$. Hence there are at most 
\begin{equation}
\label{eq:U}
\binom{\# \cU}{k} \le \binom{c_1q^{1/2}}{k}
\end{equation} 
possibilities for $\cA$ (where $c_1 > 0$ is some constant that 
depends only on $\varepsilon$).

Suppose now that $\cA$ is chosen.  Fixing an arbitrary set of $\fl{q^{\varepsilon/2}}$ elements
of $\cA$ and using the same argument we see that that 
the remaining elements of $\cB$ always belong to some 
fixed set  $\cW \subseteq \F_q$ of size 
$\#\cW \ll q^{1/2}$. Therefore,  there are at most 
\begin{equation}
\label{eq:W}
\binom{\# \cW}{k} \le \binom{c_2q^{1/2}}{m}
\end{equation} 
possibilities for the remaining elements of $\cB$ 
(where $c_2 > 0$ is some constant that 
depends only on $\varepsilon$).  Hence, combining~\eqref{eq:U}
and~\eqref{eq:W}, we obtain
$$
N(\cV, k,m,q) \le  \binom{c_1q^{1/2}}{k}\binom{c_2q^{1/2}}{m}.
$$
Summing over all choices for $\cV$ yields
\begin{equation*}
\begin{split}
N(k,m,q) & \le   \binom{q}{q^{\varepsilon/2}} \binom{c_1q^{1/2}}{k}\binom{c_2q^{1/2}}{m}\\
&\le q^{q^{\varepsilon/2}} \binom{c_1q^{1/2}}{k}\binom{c_2q^{1/2}}{m},
 \end{split}
\end{equation*}
which concludes the proof. 
\end{proof}

Now, using the fact that $N(k,m,q) \ne 0$ only if $q^{1/2}\ll k\ll q^{1/2}$
and $q^{1/2}\ll m\ll q^{1/2}$, 
see~\eqref{eq:low up}, we easily derive  Theorem~\ref{thm:Nq}
from~Lemma~\ref{lem:Nkmq}.

\section*{Acknowledgments}

During the preparation of the work, the second author was supported by
Russian Fund for Basic Research, Grant N.~14-01-00332,
and Program Supporting Leading Scientific Schools, Grant Nsh-3082.2014.1; 
the third author was supported by the 
Australian Research Council, Grant DP140100118.

\end{document}